\theoremstyle{plain}% default
\theoremstyle{definition}
\newtheorem{theorem}{Theorem}[section]
\newtheorem{lemma}[theorem]{Lemma}
\newtheorem{proposition}[theorem]{Proposition}
\newtheorem{remark}[theorem]{Remark}
\newcommand{\lh}{\widehat{L}}
\begin{document}

\title{Polynomial estimates and radius of analyticity on real Banach spaces}
\author{M. K. Papadiamantis }

\begin{abstract}
A generalization of Problem 73 of Mazur and Orlicz in the Scottish Book was introduced from L. Harris. The exact value of the constant that appears there is known when complex normed linear spaces are considered. In this paper, we give estimates in the case of an arbitrary real normed linear space and a real $\ell_p$ space. Moreover, if $F(x)$ is a power series, $\rho$ its radius of uniform convergence and $\rho_{\substack{\\ A}}$ its radius of analyticity, we prove that $\rho_{\substack{\\ A}}\geq\rho/\sqrt{2}$ and give some respective results for the $n$th Fr\'{e}chet derivative of $F(x)$.
\end{abstract}

\maketitle

\section{introduction}

If $X$ is a Banach space over $\mathbb{K}$, $\mathbb{K}=\mathbb{R}$ or $\mathbb{C}$, we let
$\mathcal{L}^s(^mX)$ denote the Banach space of all continuous symmetric $m$-linear forms $L:X^m\rightarrow\mathbb{K}$ with the norm
\[\|L\|=\sup\{|L(x_1,\ldots,x_m)|: \|x_1\| \leq 1,\ldots, \|x_m\| \leq 1\}\,.\]
A function $P: X\rightarrow\mathbb{K}$ is a continuous $m$-homogeneous polynomial if there is a continuous symmetric $m$-linear form $L:X^m\rightarrow\mathbb{K}$ for which $P(x)=L(x, \ldots, x)$ for all $x\in X$. In this case it is convenient to write $P=\lh$. Let $\mathcal{P}(^mX)$ denote the Banach space
of all continuous $m$-homogeneous polynomials $P: X\rightarrow\mathbb{K}$ with the norm
\[\|P\|=\sup\{|P(x)|: \|x\|\leq 1\}\,.\]
We write $L(x_1^{k_1}\ldots x_n^{k_n})$ as shorthand for $L(x_1,\ldots,x_1,\ldots,x_n,\ldots,x_n)$ where each $x_i$ appears $k_i$ times for $1\leq i\leq n$, $k_1+\ldots+k_n=m$.\\
We also need to define the norm
\[\|L\|_{(n)}=\sup_{k_1+\ldots k_n=m}\sup\{|L(x_1^{k_1}\ldots x_n^{k_n})|: \|x_1\| \leq 1,\ldots, \|x_n\| \leq 1\},\]
which for $n=2$ gives
\[\|L\|_{(2)}=\sup_{1\leq k\leq m}\sup\{|L(x_1^kx_2^{m-k})|: \|x_1\| \leq 1, \|x_2\| \leq 1\}\,.\]
Clearly, $\|\lh\|\leq\|L\|_{(n)}\leq\|L\|_{(n+1)}\leq\|L\|$.

It is known that if $L\in\mathcal{L}^s(^mX)$ and $\lh$ the associated polynomial, then
\[\|L\|\leq\frac{m^m}{m!}\|\lh\|.\]
This is the answer of Problem 73 of Mazur and Orlicz in \cite{SB}.

A natural generalization of Problem 73 is the following:\\
Let $X$ be a normed linear space, $k_1,\ldots,k_n$ be nonnegative integers whose sum is $m$ and let $c(k_1,\ldots,k_n,X)$ be the smallest number with the property that if $L$ is any symmetric $m$-linear mapping of one real normed linear space into another, then
\[|L(x_1^{k_1}\ldots x_n^{k_n})|\leq c(k_1,\ldots,k_n,X)\|\lh\|.\]
It is shown in \cite{H1} Theorem 1, that if only complex normed linear spaces and comlex scalars are considered, then
\[c(k_1,\ldots,k_n,X)=\frac{k_1!\ldots k_n!}{k_1^{k_1}\ldots k_n^{k_n}}\frac{m^m}{m!}.\]
In the next two sections we shall find bounds for the constant $c(k_1,\ldots,k_n,X)$ in the case of real normed linear spaces and specifically in the case of real $\ell_p$ spaces.

\section{Polynomials on a real normed linear space}

From \cite{H2} Corollary 7 (see also \cite{RS}, \cite{S2}), we have
\begin{equation} \label{eq Harris}
|L(x_1^{k_1}\ldots x_n^{k_n})|\leq\sqrt{\frac{m^m}{k_1^{k_1}\ldots k_n^{k_n}}}\|\lh\|
\end{equation}
for all non-negative integers $k_1,\ldots,k_n$ with $k_1+\ldots+k_n=m$.\\
For $n=2$ we get
\[|L(x_1^kx_2^{m-k})|\leq\sqrt{\frac{m^m}{k^k(m-k)^{m-k}}}\|\lh\|\]
and we can easily see that the square root takes its maximum value which is $(\sqrt{2})^m$ when $k=\frac{m}{2}$.\\
To see this, simply consider the function $f(k)=k^k(m-k)^{m-k}$.\\
Then,
\begin{eqnarray*}
f'(k) &=& (1+\ln k)k^k(m-k)^{m-k}-k^k[1+\ln(m-k)](m-k)^{m-k}\\
&=& k^k(m-k)^{m-k}\ln\frac{k}{m-k}
\end{eqnarray*}
and
\[f'(k)=0\Leftrightarrow\ln\frac{k}{m-k}=0\Leftrightarrow\frac{k}{m-k}=1\Leftrightarrow k=\frac{m}{2},\]
since $k\neq0$ and $k\neq m$.\\
Moreover, $f'(k)<0$ for $0<k<\frac{m}{2}$ (i.e. $f$ if strictly decreasing for $0<k<\frac{m}{2}$) and $f'(k)>0$ for $\frac{m}{2}<k<m$ (i.e. $f$ if strictly increasing for $\frac{m}{2}<k<m$).\\
Therefore, the minimum value of $f$ is $f\big(\frac{m}{2}\big)=\big(\frac{m}{2}\big)^m$. Thus,
\[\bigg(\frac{\|L\|_{(2)}}{\|\lh\|}\bigg)^{\frac{1}{m}}\leq\sqrt{2}.\]
If $m$ is odd, the previous inequality is strict (since $k\in\mathbb{N}$).\\
Similarly, we get the following general result:
\[\bigg(\frac{\|L\|_{(n)}}{\|\lh\|}\bigg)^{\frac{1}{m}}\leq\sqrt{n}\]

From \cite{N} Lemma 3, we have
\[|L(x_1^{k_1}\ldots x_n^{k_n})|\leq e^{\frac{m}{2}}C_m
\begin{pmatrix} m+n-1\\n-1 \end{pmatrix}
\|\lh\|.\]
The value of the constant $C_m$ needs to be determined in order to compare this result with (\ref{eq Harris}).
To do this, we need to find a bound for $\mathbb{E}(|A|^k)$ which is used in the proof of \cite{N} Lemma 3. Stirling's formula and some elementary calculations will do the job.\\
For $k\neq2$ even:
\begin{eqnarray*}
\mathbb{E}(|A|^k) &\leq& k(2p)^\frac{k}{2}\Gamma\bigg(\frac{k}{2}\bigg)=k(2p)^\frac{k}{2}\bigg(\frac{k}{2}-1\bigg)!=
k(2p)^\frac{k}{2}\bigg(\frac{\frac{k}{2}-1}{e}\bigg)^{\frac{k}{2}-1}\sqrt{\frac{k}{2}-1}\\
&=& k(2p)^\frac{k}{2}\bigg(\frac{k-2}{2e}\bigg)^\frac{k}{2}\frac{2e}{k-2}\sqrt{\frac{k-2}{2}}=
\frac{ke\sqrt{2}}{\sqrt{k-2}}\bigg[\frac{p(k-2)}{e}\bigg]^\frac{k}{2}\\
&=& \frac{ke\sqrt{2}}{\sqrt{k-2}}\bigg(\frac{k-2}{k}\bigg)^\frac{k}{2}\bigg(\frac{pk}{e}\bigg)^\frac{k}{2}=
e\sqrt{2}\sqrt{k-2}\bigg(\frac{k-2}{k}\bigg)^{\frac{k}{2}-1}\bigg(\frac{pk}{e}\bigg)^\frac{k}{2}\\
&\leq& e\sqrt{2(k-2)}\bigg(\frac{pk}{e}\bigg)^\frac{k}{2}.
\end{eqnarray*}
For $k\neq1$ odd:
\begin{eqnarray*}
\mathbb{E}(|A|^k) &\leq& k(2p)^\frac{k}{2}\Gamma\bigg(\frac{k}{2}\bigg)\leq k(2p)^\frac{k}{2}\Gamma\bigg(\frac{k+1}{2}\bigg)=k(2p)^\frac{k}{2}\bigg(\frac{k-1}{2}\bigg)!\\
&=& k(2p)^\frac{k}{2}\bigg(\frac{\frac{k-1}{2}}{e}\bigg)^{\frac{k-1}{2}}\sqrt{\frac{k-1}{2}}=
k(2p)^\frac{k}{2}\bigg(\frac{k-1}{2e}\bigg)^\frac{k}{2}\sqrt{\frac{2e}{k-1}}\sqrt{\frac{k-1}{2}}\\
&=& k\sqrt{e}\bigg[\frac{p(k-1)}{e}\bigg]^\frac{k}{2}=
k\sqrt{e}\bigg(\frac{k-1}{k}\bigg)^\frac{k}{2}\bigg(\frac{pk}{e}\bigg)^\frac{k}{2}\\
&=& (k-1)\sqrt{e}\bigg(\frac{k-1}{k}\bigg)^{\frac{k}{2}-1}\bigg(\frac{pk}{e}\bigg)^\frac{k}{2}\leq
(k-1)\sqrt{e}\bigg(\frac{pk}{e}\bigg)^\frac{k}{2}.
\end{eqnarray*}
Moreover, $\mathbb{E}(|A|)\leq\sqrt{2p}$ and $\mathbb{E}(|A|^2)\leq4p$.\\
In order to make the next calculations easier, we have to unite all the above cases. Thus, we take $\mathbb{E}(|A|^k)\leq ke\big(\frac{pk}{e}\big)^\frac{k}{2}$. The term $ke$ could be slightly better, but it would not make any essential difference.\\
At the end of the proof of \cite{N} Lemma 3, we need to calculate the $\sup_{k_i}(C_{k_1}\ldots C_{k_n})$, where $k_1+\ldots+k_n=m$. We have:
\[\sup_{k_i}(C_{k_1}\ldots C_{k_n})=\sup_{k_i}(k_1e\ldots k_ne)=e^n\sup_{k_i}(k_1\ldots k_n)\leq e^n\bigg(\frac{m}{n}\bigg)^n=\bigg(\frac{em}{n}\bigg)^n\]
Finally, we get
\begin{equation} \label{eq Nguyen}
|L(x_1^{k_1}\ldots x_n^{k_n})|\leq e^{\frac{m}{2}}\bigg(\frac{em}{n}\bigg)^n
\begin{pmatrix} m+n-1\\n-1 \end{pmatrix}
\|\lh\|.
\end{equation}
Inequality (\ref{eq Nguyen}) is worse than inequality (\ref{eq Harris}). But asymptotically it gives better estimate, so it can be useful in some cases.

Combining inequalities (\ref{eq Harris}) and (\ref{eq Nguyen}) we obtain the following

\begin{lemma} \label{lemma asymptotic}
Let $L:X^m\rightarrow Y$ be an $m$-linear map. Then
\[
\bigg(\frac{\|L\|_{(n)}}{\|\lh\|}\bigg)^{\frac{1}{m}}\leq
\begin{cases}
\sqrt{2},\ \text{if }n=2\\
C\sqrt{e},\ \text{if }n\geq3
\end{cases}
\]
where $C=C(m,n)$ is independent of $L$, $X$, $Y$ and tends to 1 as $m\rightarrow\infty$ for fixed $n$.
\end{lemma}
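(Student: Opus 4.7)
The plan is to split into the two cases and recycle the two inequalities already collected in the section. The $n=2$ case is essentially already established in the discussion preceding the lemma: I would invoke Harris's inequality~(\ref{eq Harris}) with $n=2$, recall the calculation above showing that $f(k)=k^k(m-k)^{m-k}$ attains its minimum on $(0,m)$ at $k=m/2$ with value $(m/2)^m$, conclude that $\sqrt{m^m/(k^k(m-k)^{m-k})}\leq(\sqrt{2})^m$, and take $m$-th roots.

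For $n\geq 3$ I would start from Nguyen's inequality~(\ref{eq Nguyen}), take $m$-th roots, and pull out the factor $\sqrt{e}$:
\[\left(\frac{\|L\|_{(n)}}{\|\lh\|}\right)^{1/m}\leq \sqrt{e}\,\Bigl[\bigl(em/n\bigr)^{n}\binom{m+n-1}{n-1}\Bigr]^{1/m}.\]
Denote the bracketed factor by $C_1(m,n)$. Taking logarithms and bounding the binomial coefficient crudely by $(m+n)^{n-1}$ gives $\log C_1(m,n)=O((\log m)/m)$ for fixed $n$, hence $C_1(m,n)\to 1$ as $m\to\infty$. The constant $C_1$ depends only on $m$ and $n$, not on $L$, $X$ or $Y$.

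To assemble a single constant $C(m,n)$ that is well behaved uniformly, I would set $C(m,n):=\min\{C_1(m,n),\sqrt{n/e}\}$: then the bound $(\|L\|_{(n)}/\|\lh\|)^{1/m}\leq C\sqrt{e}$ is provided either by (\ref{eq Nguyen}) through $C_1$, or by the Harris bound $\sqrt{n}$ derived earlier in the section, whichever is smaller; and $C(m,n)\to 1$ as $m\to\infty$ because for $n\geq 3$ one has $\sqrt{n/e}>1$, so eventually $C_1(m,n)<\sqrt{n/e}$ and the minimum is $C_1$. The $Y$-valued case reduces immediately to the scalar case by composing $L$ with unit functionals $y^{*}\in Y^{*}$ and taking the supremum, which transfers both (\ref{eq Harris}) and (\ref{eq Nguyen}) without loss in the constants.

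The main obstacle is minor: it is the asymptotic claim $C_1(m,n)\to 1$, which I expect to reduce quickly to the observation that $\binom{m+n-1}{n-1}$ is polynomial in $m$ of fixed degree $n-1$ and $(em/n)^n$ is polynomial of degree $n$, so the product is polynomial in $m$ and its $m$-th root tends to $1$. The rest is bookkeeping between the two available bounds to make sure $C$ depends only on $m$ and $n$.
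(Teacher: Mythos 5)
Your proposal is correct and follows essentially the same route as the paper, which derives the lemma by combining inequality~(\ref{eq Harris}) (giving the $\sqrt{2}$ case via the minimization of $k^k(m-k)^{m-k}$) with inequality~(\ref{eq Nguyen}) (giving $C\sqrt{e}$ after taking $m$-th roots, with the residual factor tending to $1$ since it is the $m$-th root of a quantity polynomial in $m$). Your additional bookkeeping with $\min\{C_1(m,n),\sqrt{n/e}\}$ and the reduction of the $Y$-valued case to scalars via functionals are harmless refinements of the same argument.
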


The $n$th Rademacher function is defined on $[0,1]$ by $r_n=\text{sign}\sin2^n\pi t$.

\begin{lemma}({\bf{Polarization Formula}}) \label{polarization formula}
Let $X$ be a vector space and $L\in\mathcal{L}^s(^mX)$. If $x_1,\ldots,x_m\in X$, then
\begin{equation} \label{eq polarization formula}
L(x_1,\ldots,x_m) = \frac{1}{m!}\int_0^1r_1(t)\ldots r_m(t)\lh\bigg[\sum_{i=1}^mr_i(t)x_i\bigg]dt.
\end{equation}
\end{lemma}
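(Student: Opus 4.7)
The plan is to expand $\lh[\sum_i r_i(t)x_i]$ by $m$-linearity of $L$, interchange the finite sum with the integral, and then use the orthogonality relations of the Rademacher system to isolate exactly the permutation terms, each of which collapses to $L(x_1,\ldots,x_m)$ by symmetry of $L$.

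First, since $L$ is $m$-linear,
\[
\lh\!\left[\sum_{i=1}^m r_i(t)\,x_i\right]=\sum_{i_1,\ldots,i_m=1}^{m} r_{i_1}(t)\cdots r_{i_m}(t)\,L(x_{i_1},\ldots,x_{i_m}).
\]
Multiplying through by $r_1(t)\cdots r_m(t)$ and integrating termwise reduces the problem to evaluating, for each multi-index $(i_1,\ldots,i_m)\in\{1,\ldots,m\}^m$, the scalar
\[
I(i_1,\ldots,i_m)=\int_0^1 r_1(t)\cdots r_m(t)\,r_{i_1}(t)\cdots r_{i_m}(t)\,dt.
\]

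Next I would invoke the defining property of the Rademacher system: since each $r_j$ takes values in $\{\pm 1\}$ and $\{r_j\}$ is statistically independent on $[0,1]$, one has $\int_0^1\prod_j r_j(t)^{e_j}\,dt=1$ if every $e_j$ is even and $0$ otherwise. In $I(i_1,\ldots,i_m)$ the exponent of $r_j$ equals $1+k_j$, where $k_j=\#\{\ell:i_\ell=j\}$. Evenness forces each $k_j$ to be odd, hence $k_j\geq 1$; combined with $\sum_j k_j=m$ this yields $k_j=1$ for every $j$. Consequently $I(i_1,\ldots,i_m)$ vanishes unless $(i_1,\ldots,i_m)$ is a permutation of $(1,\ldots,m)$, in which case it equals $1$.

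Finally, summing over the $m!$ permutations $\sigma$ and invoking the symmetry of $L$, namely $L(x_{\sigma(1)},\ldots,x_{\sigma(m)})=L(x_1,\ldots,x_m)$, gives $m!\,L(x_1,\ldots,x_m)$ for the surviving contribution. Dividing by $m!$ yields (\ref{eq polarization formula}). The only nontrivial ingredient is the Rademacher orthogonality identity; the remainder is index bookkeeping together with an appeal to symmetry, so I do not anticipate a serious obstacle.
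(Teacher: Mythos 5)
Your argument is correct: the expansion by multilinearity followed by the Rademacher orthogonality relations (each exponent $1+k_j$ must be even, forcing $k_j=1$ for all $j$, so only the $m!$ permutation terms survive) is the standard proof of the polarization formula. The paper omits the proof entirely, referring to \cite{S1} Lemma 2, and the argument given there is essentially the one you reconstructed.
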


The proof of Lemma \ref{polarization formula} is easy, so it is omitted (see \cite{S1} Lemma 2).

\begin{proposition} \label{proposition X}
Let $X$, $Y$ be real normed linear spaces and $L:X^m\rightarrow Y$ be a continuous symmetric $m$-linear mapping with associated homogeneous polynomial $\lh$. If $x_1,\ldots,x_n$ are norm-one vectors in $X$, then
\[\frac{|L(x_1^{k_1}\ldots x_n^{k_n})|}{\|\lh\|}\leq\min\bigg\{
\sqrt{\frac{m^m}{k_1^{k_1}\ldots k_n^{k_n}}}\ ,
\frac{k_1^{k_1}\ldots k_n^{k_n}}{m!}n^m\bigg\}\]
for all non-negative integers $k_1,\ldots,k_n$ with $k_1+\ldots+k_n=m$.
\end{proposition}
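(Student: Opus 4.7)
The first bound in the minimum is precisely inequality~(\ref{eq Harris}) stated earlier, so my plan is to focus entirely on the second bound
\[|L(x_1^{k_1}\dots x_n^{k_n})|\le \frac{k_1^{k_1}\dots k_n^{k_n}}{m!}\,n^m\,\|\lh\|\]
and to deduce it from a single application of the polarization formula (Lemma~\ref{polarization formula}) after a carefully chosen rescaling of the arguments, in the same spirit as the classical Mazur--Orlicz argument but with a sharper control on the size of the vector fed into $\lh$.

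Assuming first that every $k_i$ is strictly positive, I partition $\{1,\dots,m\}$ into blocks $I_1,\dots,I_n$ with $|I_i|=k_i$ and set $y_j=x_i/k_i$ for each $j\in I_i$. By multilinearity and symmetry of $L$,
\[L(y_1,\dots,y_m)=\bigg(\prod_{i=1}^n k_i^{-k_i}\bigg)L(x_1^{k_1}\dots x_n^{k_n}),\]
so it suffices to prove $|L(y_1,\dots,y_m)|\le n^m\|\lh\|/m!$. Inserting the $y_j$ into~(\ref{eq polarization formula}) and using $|\lh(v)|\le\|\lh\|\cdot\|v\|^m$, the only quantity to control is the norm of the argument of $\lh$ inside the integrand, and here the rescaling pays off:
\[\bigg\|\sum_{j=1}^m r_j(t)\,y_j\bigg\|\le\sum_{j=1}^m\|y_j\|=\sum_{i=1}^n k_i\cdot\frac{1}{k_i}=n.\]
This gives $|\lh(\sum_j r_j(t)y_j)|\le n^m\|\lh\|$ pointwise in $t$, hence $|L(y_1,\dots,y_m)|\le n^m\|\lh\|/m!$, and rearranging produces the claimed inequality.

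If some $k_i$ vanish, the argument is unchanged: the corresponding $x_i$ does not appear on the left, so I simply run the above with $n$ replaced by $n'$, the number of indices with $k_i>0$, and then use $0^0=1$ and $n'\le n$ to recover exactly the stated bound.

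I do not foresee any genuine obstacle; the only point requiring thought is the correct choice of scaling. With the naive choice $y_j=x_i$ the polarization estimate gives only $\|\sum r_j y_j\|\le\sum k_i=m$, recovering nothing beyond Mazur--Orlicz's $m^m/m!$. Dividing by $k_i$ trades the size $k_i$ contributed by each block for $1$, and the $k_i^{-k_i}$ prefactors produced by multilinearity reassemble, after undoing the rescaling, into the product $\prod k_i^{k_i}$ that appears in the target bound.
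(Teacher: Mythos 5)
Your proposal is correct and follows essentially the same route as the paper: invoking inequality~(\ref{eq Harris}) for the first bound, and for the second applying the polarization formula to the rescaled vectors $x_i/k_i$ so that the argument of $\lh$ has norm at most $n$, with the factors $k_i^{-k_i}$ from multilinearity accounting for the product $k_1^{k_1}\cdots k_n^{k_n}$ in the final estimate. Your explicit treatment of vanishing $k_i$'s is a small tidiness the paper omits, but the argument is the same.
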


\begin{proof}
Using Lemma \ref{polarization formula}, we have
\[L(x_1^{k_1}\ldots x_n^{k_n}) = \frac{1}{m!}\int_0^1r_1(t)\ldots r_m(t)\lh[(r_1(t)+\ldots+r_{k_1}(t))x_1+\ldots]dt\]

\[\Leftrightarrow L\bigg(\bigg(\frac{x_1}{k_1}\bigg)^{k_1}\ldots\bigg(\frac{x_n}{k_n}\bigg)^{k_n}\bigg) =
\frac{1}{m!}\int_0^1r_1(t)\ldots r_m(t)\lh\bigg[(r_1(t)+\ldots+r_{k_1}(t))\frac{x_1}{k_1}+\ldots\bigg]dt\]

\[\Leftrightarrow \frac{1}{k_1^{k_1}\ldots k_n^{k_n}}L(x_1^{k_1}\ldots x_n^{k_n}) =
\frac{1}{m!}\int_0^1r_1(t)\ldots r_m(t)\lh\bigg[(r_1(t)+\ldots+r_{k_1}(t))\frac{x_1}{k_1}+\ldots\bigg]dt\]
Therefore,
\begin{equation} \label{eq new}
\begin{aligned}
|L(x_1^{k_1}\ldots x_n^{k_n})| & \leq
\frac{k_1^{k_1}\ldots k_n^{k_n}}{m!}\|\lh\|\int_0^1\bigg\|(r_1(t)+\ldots+r_{k_1}(t))\frac{x_1}{k_1}+\ldots\bigg\|^mdt\\
& \leq
\frac{k_1^{k_1}\ldots k_n^{k_n}}{m!}\|\lh\|\int_0^1\bigg(\frac{|r_1(t)+\ldots+r_{k_1}(t)|}{k_1}+\ldots\bigg)^mdt\\
& \leq \frac{k_1^{k_1}\ldots k_n^{k_n}}{m!}n^m\|\lh\|.
\end{aligned}
\end{equation}
Inequalities (\ref{eq Harris}) and (\ref{eq new}) complete the proof.
\end{proof}

Asymptotically, inequality (\ref{eq new}) gives much worse estimate than inequalities (\ref{eq Harris}) and (\ref{eq Nguyen}). It is useful though for large $n$'s and when $n$ depends on $m$, while inequality (\ref{eq Harris}) is useful only for fixed $n$'s. We now conclude that:

\begin{theorem} \label{theorem X}
Let $X$, $Y$ be real normed linear spaces and $L:X^m\rightarrow Y$ be a continuous symmetric $m$-linear mapping with associated homogeneous polynomial $\lh$. If $x_1,\ldots,x_n$ are norm-one vectors in $X$, then
\[\frac{k_1!\ldots k_n!}{k_1^{k_1}\ldots k_n^{k_n}}\frac{m^m}{m!}\leq c(k_1,\ldots,k_n,X)\leq\min\bigg\{
\sqrt{\frac{m^m}{k_1^{k_1}\ldots k_n^{k_n}}}\ ,
\frac{k_1^{k_1}\ldots k_n^{k_n}}{m!}n^m\bigg\}\]
for all non-negative integers $k_1,\ldots,k_n$ with $k_1+\ldots+k_n=m$.
\end{theorem}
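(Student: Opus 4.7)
The plan is to split the theorem into an upper bound, which follows at once from Proposition~\ref{proposition X}, and a lower bound, which I will establish by producing a single extremal example in a real normed space.

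For the upper bound, Proposition~\ref{proposition X} already gives, for every continuous symmetric $m$-linear $L$ on any real normed linear space and every choice of norm-one vectors $x_1,\ldots,x_n$, the inequality
\[|L(x_1^{k_1}\cdots x_n^{k_n})|\le\min\!\Bigl\{\sqrt{m^m/(k_1^{k_1}\cdots k_n^{k_n})},\ (k_1^{k_1}\cdots k_n^{k_n}/m!)\,n^m\Bigr\}\|\lh\|,\]
so the defining infimum $c(k_1,\ldots,k_n,X)$ is dominated by this minimum without further argument.

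For the lower bound I would exhibit one extremizer, the real analogue of Harris's complex extremal from \cite{H1}. Concretely, I take $X=\mathbb{R}^n$ with the $\ell_1$ norm, $Y=\mathbb{R}$, and $x_j=e_j$ for $j=1,\ldots,n$, the standard unit vectors (each of norm one). I then consider the $m$-homogeneous polynomial $\lh(y)=y_1^{k_1}y_2^{k_2}\cdots y_n^{k_n}$ together with its associated symmetric $m$-linear form $L$. Two short computations finish the job. First, AM--GM under the constraint $\sum_j|y_j|=1$ gives $\|\lh\|=k_1^{k_1}\cdots k_n^{k_n}/m^m$, the maximum being attained at $|y_j|=k_j/m$. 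Second, expanding $\lh(y)=L(y,\ldots,y)$ multinomially and matching the coefficient of $y_1^{k_1}\cdots y_n^{k_n}$ yields $\binom{m}{k_1,\ldots,k_n}L(e_1^{k_1}\cdots e_n^{k_n})=1$, whence $L(e_1^{k_1}\cdots e_n^{k_n})=k_1!\cdots k_n!/m!$. Dividing these two quantities produces the ratio $\tfrac{k_1!\cdots k_n!}{k_1^{k_1}\cdots k_n^{k_n}}\tfrac{m^m}{m!}$, which is precisely the claimed lower bound.

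I do not anticipate a genuine technical obstacle. The only conceptual point worth emphasizing is that the exact value of $c$ obtained by Harris in the complex setting is already witnessed by a purely real $\ell_1^n$ example, so no loss is incurred when passing from $\mathbb{C}$ to $\mathbb{R}$ for the \emph{lower} bound; the real--complex gap visible in the statement comes entirely from the upper bound, whose proof in the complex case relies on one-variable complex-analytic tools unavailable in the real estimates (\ref{eq Harris}) and (\ref{eq new}) that feed into Proposition~\ref{proposition X}.
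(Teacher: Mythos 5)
Your proposal is correct and follows essentially the same route as the paper: the upper bound is quoted from Proposition~\ref{proposition X}, and the lower bound comes from the Harris-type extremal example on a finite-dimensional $\ell_1$ space, with the norm of $\lh$ controlled by the arithmetic--geometric mean inequality. Your extremizer $\lh(y)=y_1^{k_1}\cdots y_n^{k_n}$ on $\ell_1^n$ evaluated at the coordinate vectors is just the paper's polynomial $x_1\cdots x_m$ on $\ell_1^m$ restricted to the span of the block averages $y^j=\frac{1}{k_j}(e^{k_1+\cdots+k_{j-1}+1}+\cdots+e^{k_1+\cdots+k_j})$, so the multinomial-coefficient computation and the permutation count in the paper yield the same value $k_1!\cdots k_n!/(m!\,k_1^{k_1}\cdots k_n^{k_n})$.
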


\begin{proof}
The right hand side inequality is an immediate consequence of Proposition \ref{proposition X}.\\
For the left hand side, let $x^i=(x_n^i)_{n=1}^\infty\in X$, $i=1,\ldots,m$ and $l\in\mathcal{L}^s(^mX)$ be defined by
\[L(x^1,\ldots,x^m)=\frac{1}{m!}\sum_{\sigma\in S_m}x_{\sigma(1)}^1\ldots x_{\sigma(m)}^m,\]
where $S_m$ is the set of permutations of the first $m$ natural numbers.
Then
\[L\big((x^1)^{k_1}\ldots(x^n)^{k_n}\big)=\frac{1}{m!}\sum_{\sigma\in S_m}x_{\sigma(1)}^1\ldots x_{\sigma(k_1)}^1\ldots x_{\sigma(k_1+\ldots k_{n-1}+1)}^n\ldots x_{\sigma(k_1+\ldots k_n)}^n.\]
Take $e_i$ to be the $i$th coordinate vector of $X$ and define
\[y^1=\frac{1}{k_1}(e^1+\ldots+e^{k_1})\]
\[y^2=\frac{1}{k_2}(e^{k_1+1}+\ldots+e^{k_1+k_2})\]
\[\vdots\]
\[y^n=\frac{1}{k_n}(e^{k_1+\ldots+k_{n-1}+1}+\ldots+e^{k_1+\ldots+k_n}).\]
Then an easy calculation shows that $y^1,\ldots,y^n$ are unit vectors in $X$ and
\[L\big((y^1)^{k_1}\ldots(y^n)^{k_n}\big)=\frac{1}{m!}\frac{k_1!\ldots k_n!}{k_1^{k_1}\ldots k_n^{k_n}}.\]
On the other hand, $\|\lh\|\leq\frac{1}{m^m}$, since
\[|\lh(x)|=|x_1\ldots x_m|=\big[(|x_1|\ldots|x_m|)^\frac{1}{m}\big]^m\leq\bigg(\frac{|x_1|+\ldots+|x_m|}{m}\bigg)^m\]
by the arithmetic-geometric mean inequality. Thus
\[\big|L\big((y^1)^{k_1}\ldots(y^n)^{k_n}\big)\big|\geq\frac{k_1!\ldots k_n!}{k_1^{k_1}\ldots k_n^{k_n}}\frac{m^m}{m!}\|\lh\|.\]
\end{proof}

\begin{remark}
For $k_1=\ldots=k_n=1$, the upper and lower bound of $c(k_1,\ldots,k_n,X)$ in Theorem \ref{theorem X} give the same estimate which is $\frac{m^m}{m!}$.
\end{remark}

\section{polynomials on a real $\ell_p$ space}

In \cite{S1} Theorem 2, Y. Sarantopoulos proved that in the case of $L_p(\mu)$, for $1\leq p\leq m'$, $\frac{1}{m}+\frac{1}{m'}=1$ holds that
\[\|L\|\leq\frac{m^\frac{m}{p}}{m!}\|\lh\|.\]
This is an improved estimate of the one in \cite{H1} by L. A. Harris.
Here we shall give some estimates of the constant $c(k_1,\ldots,k_n,\ell_p)$ for which $|L(x_1^{k_1}\ldots x_n^{k_n})|\leq c(k_1,\ldots,k_n,\ell_p)\|\lh\|$.

If $f$ is a measurable function on $(X,\mathcal{A},\mu)$, we define its distribution function $\lambda_f:(0,+\infty)\rightarrow[0,+\infty]$ by
\[\lambda_f(a)=\mu(\{x:|f(x)|>a\}).\]
From \cite{F}, we have the following

\begin{proposition}
If $\lambda_f(a)<\infty$ for every $a>0$ and $\phi$ is a nonnegative Borel function on $(0,\infty)$, then
\[\int_X\phi\circ|f|d\mu=-\int_0^\infty\phi(a)d\lambda_f(a).\]
\end{proposition}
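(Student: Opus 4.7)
Interpret the right-hand side as the Lebesgue--Stieltjes integral against the nonnegative Borel measure $\nu$ on $(0,\infty)$ induced by the non-increasing right-continuous function $-\lambda_f$; this measure is characterized on half-open intervals by $\nu((a,b])=\lambda_f(a)-\lambda_f(b)$ for $0<a<b<\infty$, and is $\sigma$-finite precisely because of the hypothesis $\lambda_f(a)<\infty$ for every $a>0$. The plan is the standard bootstrap: verify the identity first for indicator functions of half-lines, extend by linearity to simple functions, and then invoke monotone convergence for general $\phi$.

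For the indicator case, I would take $\phi=\chi_{(b,\infty)}$ with $b\geq 0$. The left-hand side equals $\mu(\{|f|>b\})=\lambda_f(b)$, while the right-hand side equals $\nu((b,\infty))=\lambda_f(b)-\lim_{a\to\infty}\lambda_f(a)$. The finiteness hypothesis combined with continuity from above of $\mu$ gives $\lim_{a\to\infty}\lambda_f(a)=\mu(\{|f|=\infty\})$; since $\phi$ is defined only on $(0,\infty)$, the composition $\phi\circ|f|$ is conventionally zero on $\{|f|=\infty\}$, so the left-hand side is really $\lambda_f(b)-\mu(\{|f|=\infty\})$ and the two sides match. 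Linearity then extends the identity to all nonnegative finite linear combinations of such indicators.

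A standard monotone class (or $\pi$-$\lambda$) argument upgrades this to $\phi=\chi_A$ for every Borel $A\subset(0,\infty)$: the family of $A$'s for which the identity holds is closed under disjoint countable unions (by monotone convergence, with $\lambda_f(a)<\infty$ controlling the finite pieces) and under proper differences, and contains the $\pi$-system of half-lines $(b,\infty)$. Linearity gives the identity for nonnegative simple Borel $\phi$, and a final application of the monotone convergence theorem to $\phi_n\uparrow\phi$ on both sides closes the argument for general nonnegative Borel $\phi$. The main subtlety throughout is the behavior at infinity: without the finiteness hypothesis on $\lambda_f$, the pushforward identification of $\nu$ with $-d\lambda_f$ breaks down already at the indicator step, so this assumption is precisely what makes the layer-cake representation work.
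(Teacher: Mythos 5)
The paper gives no proof of this proposition at all: it is imported verbatim from Folland's \emph{Real Analysis} (the reference \cite{F}), so there is nothing in the paper to compare your argument against. Your proposal is correct and is essentially the standard proof of this ``layer cake'' identity, which is also the one found in Folland: realize $-d\lambda_f$ as the Lebesgue--Stieltjes measure $\nu$ of the non-increasing, right-continuous function $\lambda_f$, verify the identity on a generating $\pi$-system, and bootstrap through simple functions and monotone convergence. Your handling of the set $\{|f|=\infty\}$ at the indicator step is correct and is a point many write-ups gloss over (in the paper's only application $f$ is a finite sum of Rademacher functions, so it is vacuous there). The one step you should tighten is the $\lambda$-system argument: closure under proper differences requires subtracting measures of sets, and $\nu$ need not be finite on all of $(0,\infty)$ --- the hypothesis only gives $\lambda_f(a)<\infty$ for each $a>0$, so $\nu$ may have infinite total mass near $0$. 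The standard repair is to run the uniqueness argument on $(1/n,\infty)$, where both $\nu$ and the relevant pushforward of $\mu$ are finite, and let $n\to\infty$; equivalently, note that $\nu((a,b])=\mu\bigl(|f|^{-1}((a,b])\bigr)$ identifies $\nu$ with the image measure of $\mu$ restricted to $\{0<|f|<\infty\}$ under $|f|$ by $\sigma$-finite uniqueness on a $\pi$-system, after which the proposition is just the change-of-variables formula for pushforward measures.
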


The case of this result we are interested in, is $\phi(a)=a^p$, which gives
\[\int|f|^pd\mu=-\int_0^{\infty}a^pd\lambda_f(a).\]
Integrating the right side by parts, we obtain
\[\int|f|^pd\mu=p\int_0^{\infty}a^{p-1}\lambda_f(a)da.\]
The validity of this calculation becomes clear if we consider that $a^p\lambda_f(a)\rightarrow0$ as $a\rightarrow0$ and $a\rightarrow\infty$ (since $\lambda_f$ is strictly decreasing). In the following Proposition the function $f$ will be of the form $f(t)=r_1(t)+\ldots+r_k(t)$, $k\in\mathbb{N}$. Therefore, using Hoeffding's inequality (see \cite{H} Theorem 2), we get that
\[\lambda_f(x):=\lambda_k(x)=P(|r_1(t)+\ldots+r_k(t)|\geq x)\leq2e^{-\frac{x^2}{2k}}.\]

\begin{proposition} \label{proposition l_p}
Let $1\leq p\leq\infty$ and $L:(\ell_p)^m\rightarrow\mathbb{R}$ be a continuous symmetric $m$-linear mapping with associated homogeneous polynomial $\lh$. If $x_1,\ldots,x_n$ are norm-one vectors in $\ell_p$ with disjoint supports, then
\[\frac{|L(x_1^{k_1}\ldots x_n^{k_n})|}{\|\lh\|}\leq
\begin{cases}
\min\bigg\{\frac{k_1^{k_1}\ldots k_n^{k_n}}{m!}n^\frac{m}{p},
\frac{p2^\frac{p}{2}\Gamma\big(\frac{p}{2}\big)m^\frac{p}{2}}{m!}\bigg\},\text{ if }p\geq m\\
\min\bigg\{\frac{k_1^{k_1}\ldots k_n^{k_n}}{m!}n^\frac{m}{p},
\frac{n^\frac{m-p}{p}m2^\frac{m}{2}\Gamma\big(\frac{m}{2}\big)m^\frac{m}{2}}{m!}\bigg\},\text{ if } p<m
\end{cases}\]
for all non-negative integers $k_1,\ldots,k_n$ with $k_1+\ldots+k_n=m$.
\end{proposition}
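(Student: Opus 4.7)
The plan is to derive both estimates in the minimum from the Polarization Formula (Lemma~\ref{polarization formula}), exploiting the disjoint-support hypothesis to reduce $\|\sum_i a_i x_i\|_p$ to the scalar $\ell_p$ norm $(\sum|a_i|^p)^{1/p}$. Set $K_0=0$, $K_i=k_1+\ldots+k_i$, and
\[f_i(t)=r_{K_{i-1}+1}(t)+\ldots+r_{K_i}(t).\]
Arranging the arguments in Lemma~\ref{polarization formula} so that $x_i$ occupies the slots $K_{i-1}+1,\ldots,K_i$, we get
\[L(x_1^{k_1}\ldots x_n^{k_n}) = \frac{1}{m!}\int_0^1 r_1(t)\ldots r_m(t)\,\lh\!\left[\sum_{i=1}^n f_i(t)x_i\right]dt.\]

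For the first estimate in each minimum I would repeat the rescaling trick of Proposition~\ref{proposition X}, substituting $x_i\mapsto x_i/k_i$ and pulling out the factor $k_1^{k_1}\ldots k_n^{k_n}/m!$. Since $|f_i(t)|\leq k_i$ pointwise, disjointness yields $\|\sum_i(f_i(t)/k_i)x_i\|_p\leq n^{1/p}$, so the $m$th power of this norm is at most $n^{m/p}$, giving the bound $\frac{k_1^{k_1}\ldots k_n^{k_n}}{m!}n^{m/p}\|\lh\|$.

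For the second estimate I would not rescale, but start from
\[|L(x_1^{k_1}\ldots x_n^{k_n})|\leq\frac{\|\lh\|}{m!}\int_0^1\Bigl(\sum_{i=1}^n|f_i(t)|^p\Bigr)^{m/p}dt,\]
and combine it with the Hoeffding-type estimate reviewed right before the statement, namely $\int_0^1|f_i(t)|^q\,dt\leq q\cdot 2^{q/2}\Gamma(q/2)k_i^{q/2}$ for each $q\geq 1$. The case split $p\geq m$ vs.\ $p<m$ is dictated by whether $s\mapsto s^{m/p}$ is concave or convex. When $p\geq m$ (concave case), Jensen on the outer integral gives
\[\int_0^1\Bigl(\sum_i|f_i|^p\Bigr)^{m/p}dt\leq\Bigl(\sum_i\int_0^1|f_i|^p\,dt\Bigr)^{m/p}.\]
Hoeffding with exponent $p$ and the elementary inequality $\sum_i k_i^{p/2}\leq m^{p/2}$ then bound this by $(p\cdot 2^{p/2}\Gamma(p/2)m^{p/2})^{m/p}$, and since this quantity is at least $1$ while $m/p\leq 1$, dropping the exponent only weakens the estimate, producing the stated $p\cdot 2^{p/2}\Gamma(p/2)m^{p/2}/m!$. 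When $p<m$ (convex case), I apply Jensen inside the integral,
\[\Bigl(\sum_i|f_i|^p\Bigr)^{m/p} = n^{m/p}\Bigl(\tfrac{1}{n}\sum_i|f_i|^p\Bigr)^{m/p}\leq n^{m/p-1}\sum_i|f_i|^m,\]
and then Hoeffding with exponent $m$, together with $\sum_i k_i^{m/2}\leq m^{m/2}$, produce the factor $n^{(m-p)/p}m\cdot 2^{m/2}\Gamma(m/2)m^{m/2}/m!$.

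The main technical point is selecting the right Jensen manipulation in each regime so that Hoeffding's inequality can be applied with the exponent ($p$ for $p\geq m$, $m$ for $p<m$) that eventually matches the $\Gamma$-factor in the target; the disjoint-support hypothesis is what allows $\|\sum_i f_i(t)x_i\|_p^p$ to split additively into $\sum_i|f_i(t)|^p$, letting us treat each integral $\int|f_i|^q\,dt$ individually via the distribution-function identity.
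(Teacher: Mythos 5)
Your proposal is correct and follows essentially the same route as the paper: polarization with the arguments grouped into blocks, disjoint supports to split the $\ell_p$ norm, the rescaling trick of Proposition~\ref{proposition X} for the first bound, and the distribution-function identity plus Hoeffding for the second, with the $p\geq m$ / $p<m$ split handled by the two directions of Jensen exactly as in the paper. If anything, your explicit justification for dropping the exponent $m/p$ in the concave case (the base being at least $1$) is more careful than the paper's, which passes from $\int_0^1(\sum_i|f_i|^p)^{m/p}dt$ to $\sum_i\int_0^1|f_i|^p\,dt$ without comment.
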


\begin{proof}
Working as in the proof of Proposition \ref{proposition X}, we get
\begin{equation} \label{eq l_p}
\begin{aligned}
|L(x_1^{k_1}\ldots x_n^{k_n})| & \leq
\frac{k_1^{k_1}\ldots k_n^{k_n}}{m!}\|\lh\|\int_0^1\bigg\|(r_1(t)+\ldots+r_{k_1}(t))\frac{x_1}{k_1}+\ldots\bigg\|_p^mdt\\
& \leq
\frac{k_1^{k_1}\ldots k_n^{k_n}}{m!}\|\lh\|\int_0^1\bigg[\bigg(\frac{|r_1(t)+\ldots+r_{k_1}(t)|}{k_1}\bigg)^p+\ldots\bigg]^\frac{m}{p}dt\\
& \leq \frac{k_1^{k_1}\ldots k_n^{k_n}}{m!}n^\frac{m}{p}\|\lh\|.
\end{aligned}
\end{equation}

We shall now use a different technique and we need to distinguish two cases.

For $p\geq m$:
\begin{eqnarray*}
|L(x_1^{k_1}\ldots x_n^{k_n})| &\leq&
\frac{\|\lh\|}{m!}\int_0^1\|(r_1(t)+\ldots+r_{k_1}(t))x_1+\ldots\|_p^mdt\\
&\leq& \frac{\|\lh\|}{m!}\int_0^1(|r_1(t)+\ldots+r_{k_1}(t)|^p+\ldots)^{\frac{m}{p}}dt\\
&\leq& \frac{\|\lh\|}{m!}\bigg(p\int_0^{\infty}x^{p-1}\lambda_{k_1}(x)dx+\ldots+p\int_0^{\infty}x^{p-1}\lambda_{k_n}(x)dx\bigg)\\
&\leq& \frac{\|\lh\|}{m!}\bigg(p\int_0^{\infty}x^{p-1}2e^{-\frac{x^2}{2k_1}}dx+\ldots+p\int_0^{\infty}x^{p-1}2e^{-\frac{x^2}{2k_n}}dx\bigg)\\
&=& \frac{\|\lh\|}{m!}\bigg(p(2k_1)^{\frac{p}{2}}\Gamma\big(\frac{p}{2}\big)+\ldots+p(2k_n)^{\frac{p}{2}}\Gamma\big(\frac{p}{2}\big)\bigg)\\
&=& \frac{p2^\frac{p}{2}\Gamma\big(\frac{p}{2}\big)}{m!}\sum_{i=1}^nk_i^\frac{p}{2}\|\lh\|\leq
\frac{p2^\frac{p}{2}\Gamma\big(\frac{p}{2}\big)}{m!}\bigg(\sum_{i=1}^nk_i\bigg)^\frac{p}{2}\|\lh\|\\
&=& \frac{p2^\frac{p}{2}\Gamma\big(\frac{p}{2}\big)m^\frac{p}{2}}{m!}\|\lh\|
\end{eqnarray*}

For $p<m$:
\begin{eqnarray*}
|L(x_1^{k_1}\ldots x_n^{k_n})| &\leq&
\frac{\|\lh\|}{m!}\int_0^1\|(r_1(t)+\ldots+r_{k_1}(t))x_1+\ldots\|_p^mdt\\
&\leq& \frac{\|\lh\|}{m!}\int_0^1(|r_1(t)+\ldots+r_{k_1}(t)|^p+\ldots)^{\frac{m}{p}}dt\\
&=& \frac{n^\frac{m}{p}\|\lh\|}{m!}\int_0^1\bigg(\frac{|r_1(t)+\ldots+r_{k_1}(t)|^p+\ldots}{n}\bigg)^{\frac{m}{p}}dt\\
&\leq& \frac{n^\frac{m}{p}\|\lh\|}{m!}\int_0^1\frac{|r_1(t)+\ldots+r_{k_1}(t)|^m+\ldots}{n}dt\\
&\leq& \frac{n^\frac{m-p}{p}\|\lh\|}{m!}\bigg(m\int_0^{\infty}x^{m-1}\lambda_{k_1}(x)dx+\ldots+m\int_0^{\infty}x^{m-1}\lambda_{k_n}(x)dx\bigg)\\
&\leq& \frac{n^\frac{m-p}{p}\|\lh\|}{m!}\bigg(m\int_0^{\infty}x^{m-1}2e^{-\frac{x^2}{2k_1}}dx+\ldots+m\int_0^{\infty}x^{m-1}2e^{-\frac{x^2}{2k_n}}dx\bigg)\\ &=& \frac{n^\frac{m-p}{p}\|\lh\|}{m!}\bigg(m(2k_1)^{\frac{m}{2}}\Gamma\big(\frac{m}{2}\big)+\ldots+m(2k_n)^{\frac{m}{2}}\Gamma\big(\frac{m}{2}\big)\bigg)\\
&=& \frac{n^\frac{m-p}{p}m2^\frac{m}{2}\Gamma\big(\frac{m}{2}\big)}{m!}\sum_{i=1}^nk_i^\frac{m}{2}\|\lh\|\leq
\frac{n^\frac{m-p}{p}m2^\frac{m}{2}\Gamma\big(\frac{m}{2}\big)}{m!}\bigg(\sum_{i=1}^nk_i\bigg)^\frac{m}{2}\|\lh\|\\
&=& \frac{n^\frac{m-p}{p}m2^\frac{m}{2}\Gamma\big(\frac{m}{2}\big)m^\frac{m}{2}}{m!}\|\lh\|
\end{eqnarray*}
\end{proof}

Note that the second technique used in the proof of Proposition \ref{proposition l_p} gives better estimates than (\ref{eq l_p}) for large $m$'s and small $n$'s and its special case where $p\geq m$ holds asymptotically only for $\ell_\infty$. Moreover, observe that Proposition \ref{proposition l_p} gives better estimates than Proposition \ref{proposition X}.

\begin{remark}
The term $\sum_{i=1}^nk_i^\frac{p}{2}$ \big(resp. $\sum_{i=1}^nk_i^\frac{m}{2}$\big) in the proof of Proposition \ref{proposition l_p} can be bounded by $(m-n+1)^\frac{p}{2}+n-1$ \big(resp. $(m-n+1)^\frac{m}{2}+n-1$\big) instead of $m^\frac{p}{2}$ \big(resp. $m^\frac{m}{2}$\big) since it takes its maximum value when all but one $k_i$'s equal to 1 and the last one equals to $m-n+1$.
\end{remark}

We now conclude that:

\begin{theorem} \label{theorem l_p}
Let $1\leq p\leq\infty$ and $L:(\ell_p)^m\rightarrow\mathbb{R}$ be a continuous symmetric $m$-linear mapping with associated homogeneous polynomial $\lh$. If $x_1,\ldots,x_n$ are norm-one vectors in $\ell_p$ with disjoint supports, then:\\
if $p\geq m$
\[\frac{k_1!\ldots k_n!}{k_1^\frac{k_1}{p}\ldots k_n^\frac{k_n}{p}}\frac{m^\frac{m}{p}}{m!}\leq c(k_1,\ldots,k_n,\ell_p)\leq\min\bigg\{\frac{k_1^{k_1}\ldots k_n^{k_n}}{m!}n^\frac{m}{p},
\frac{p2^\frac{p}{2}\Gamma\big(\frac{p}{2}\big)m^\frac{p}{2}}{m!}\bigg\}\]
and if $p<m$
\[\frac{k_1!\ldots k_n!}{k_1^\frac{k_1}{p}\ldots k_n^\frac{k_n}{p}}\frac{m^\frac{m}{p}}{m!}\leq c(k_1,\ldots,k_n,\ell_p)\leq\min\bigg\{\frac{k_1^{k_1}\ldots k_n^{k_n}}{m!}n^\frac{m}{p},
\frac{n^\frac{m-p}{p}m2^\frac{m}{2}\Gamma\big(\frac{m}{2}\big)m^\frac{m}{2}}{m!}\bigg\}\]
for all non-negative integers $k_1,\ldots,k_n$ with $k_1+\ldots+k_n=m$.
\end{theorem}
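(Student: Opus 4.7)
The upper bounds in both cases $p \ge m$ and $p < m$ are immediate from Proposition \ref{proposition l_p}, so the real content of the proof is the matching lower bound. The plan is to transport the extremal example from the proof of Theorem \ref{theorem X} to the $\ell_p$ setting, with two adjustments: a sharper estimate on $\|\lh\|$ obtained by an intermediate H\"older step, and a rescaling of the evaluation vectors so that they become genuine $\ell_p$-unit vectors with pairwise disjoint supports.

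Concretely, I would take the same symmetric $m$-linear form
\[
L(x^1,\ldots,x^m) \;=\; \frac{1}{m!}\sum_{\sigma \in S_m} x^1_{\sigma(1)}\cdots x^m_{\sigma(m)}
\]
on $\ell_p$, whose associated polynomial is $\lh(x) = x_1 x_2 \cdots x_m$. For $\|x\|_p \le 1$, H\"older's inequality gives $|x_1|+\cdots+|x_m| \le m^{1-1/p}\bigl(\sum_{i=1}^m |x_i|^p\bigr)^{1/p} \le m^{1-1/p}$, and then AM--GM yields $|\lh(x)| \le m^{-m/p}$, i.e.\ $\|\lh\| \le m^{-m/p}$; this replaces the bound $\|\lh\|\le m^{-m}$ used in Theorem \ref{theorem X}. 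For the evaluation vectors, the old choice $y^i = \frac{1}{k_i}(e^{k_1+\cdots+k_{i-1}+1}+\cdots+e^{k_1+\cdots+k_i})$ has $\ell_p$-norm $k_i^{1/p-1}$, so I would rescale to
\[
\tilde y^{\,i} \;=\; k_i^{-1/p}\bigl(e^{k_1+\cdots+k_{i-1}+1}+\cdots+e^{k_1+\cdots+k_i}\bigr),
\]
which are $\ell_p$-unit vectors with pairwise disjoint supports. By $m$-linearity and the value of $L\bigl((y^1)^{k_1}\cdots(y^n)^{k_n}\bigr)$ already computed in Theorem \ref{theorem X},
\[
L\bigl((\tilde y^{\,1})^{k_1}\cdots(\tilde y^{\,n})^{k_n}\bigr) \;=\; \prod_{i=1}^n k_i^{k_i(1-1/p)}\cdot\frac{1}{m!}\cdot\frac{k_1!\cdots k_n!}{k_1^{k_1}\cdots k_n^{k_n}} \;=\; \frac{1}{m!}\cdot\frac{k_1!\cdots k_n!}{k_1^{k_1/p}\cdots k_n^{k_n/p}}.
\]
Dividing by the bound $\|\lh\|\le m^{-m/p}$ delivers exactly the claimed lower bound.

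I do not anticipate any essential obstacle: the argument is bookkeeping once the two adjustments are recognized. The only point that requires care is the choice of rescaling factor $k_i^{-1/p}$ (rather than $1/k_i$), which is precisely what converts $k_i^{k_i}$ in the denominator into $k_i^{k_i/p}$ and, together with the sharpened bound $\|\lh\|\le m^{-m/p}$, accounts for the extra $m^{m/p}$ factor that distinguishes the $\ell_p$ lower bound from the $\ell_\infty$-flavored one of Theorem \ref{theorem X}.
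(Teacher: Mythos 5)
Your proof is correct and follows essentially the same route as the paper, which for the lower bound simply says that ``some simple adjustments to the proof of Proposition \ref{proposition X}'' suffice: your two adjustments --- rescaling the test vectors by $k_i^{-1/p}$ so they become disjointly supported $\ell_p$-unit vectors, and sharpening the estimate to $\|\lh\|\leq m^{-m/p}$ via H\"older combined with the arithmetic--geometric mean inequality --- are exactly the intended ones, and your bookkeeping checks out. In fact your write-up supplies the details the paper leaves implicit.
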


\begin{proof}
The right hand side inequality is an immediate consequence of Proposition \ref{proposition l_p}.\\
For the left hand side, we just need some simple adjustments to the proof of Proposition \ref{proposition X}.
\end{proof}

\begin{remark}
For $k_1=\ldots=k_n=1$, the upper and lower bound of $c(k_1,\ldots,k_n,\ell_p)$ in Theorem \ref{theorem l_p} give the same estimate which is $\frac{m^\frac{m}{p}}{m!}$.
\end{remark}

\section{Radius of analyticity of a power series on a real Banach space}

A power series centered at $a\in X$ is a formal sum
\begin{equation} \label{eq power series}
\sum_{m=0}^\infty P_m(x-a)
\end{equation}
where for each $m$, $P_m:X\rightarrow Y$ is a continuous $m$-homogeneous polynomial.

The radius of uniform convergence of this power series is defined to be
\[\rho:=\{r:(\ref{eq power series})\text{ converges uniformly on }|x-a|\leq r\},\]
which is given by the following standard formula:
\[\rho=\frac{1}{\limsup_{m\rightarrow\infty}\|P_m\|^\frac{1}{m}}.\]
If $\rho>0$, then for every $0<r<\rho$, (\ref{eq power series}) is a uniformly and absolutely convergent series for every $x\in B_r(a)$. hence in this case, (\ref{eq power series}) defines a function on $B_{\rho}(a)$ taking values in $Y$.

The Taylor series of an infinitely differentiable function $F$ defined in a neighborhood of $a$ is the power series defined by
\[T_aF(x)=\sum_{m=0}^\infty\frac{1}{m!}D^mF(a)((x-a)^m),\]
where $D^mF(a):X^m\rightarrow Y$ is the symmetric $m$-linear map given by taking the Fr\'{e}chet derivative of $F$ $m$ times.

$F$ is called analytic at $a$ if $T_aF(x)$ has a positive radius of uniform convergence and equals $F(x)$ within the domain of uniform convergence. If $U\subset X$ is open, we say that $F$ is analytic in $U$ if it is analytic at every $a\in U$. If furthermore, $T_aF(x)$ converges uniformly in every closed ball centered at $a$ contained in $U$, for each $a\in U$, $F$ is called fully analytic in $U$.

Let $F(x)$ be a power series centered at $a$ with radius of uniform convergence $\rho>0$. The radius of analyticity $\rho_{\substack{\\ A}}=\rho_{\substack{\\ A}}(F)$ of $F(x)$ at $a$ is the largest $r>0$ such that $F(x)$ is fully analytic in $B_r(a)$.

The norm we need to control when we expand a power series at a new point is $\|L\|_{(2)}$. To see this, consider the power series (\ref{eq power series}) centered at $a=0$, which we may rewrite as
\begin{equation} \label{eq series at 0}
F(x)=\sum_{m=0}^\infty L_m(x^m).
\end{equation}
Observe that by the binomial formula, given any $y\in X$
\[L_m(x^m)=L_m((y+x-y)^m)=\sum_{k=0}^\infty
\begin{pmatrix} m\\k \end{pmatrix}
L_m(y^{m-k},(x-y)^k).\]
Then, if in
\begin{equation} \label{eq double series}
\sum_{m=0}^\infty L_m(x^m)=\sum_{m=0}^\infty\sum_{k=0}^m
\begin{pmatrix} m\\k \end{pmatrix}
L_m(y^{m-k},(x-y)^k)
\end{equation}
the double series on the right converges absolutely, we can interchange summations and obtain
\[F(x)=\sum_{m=0}^\infty L_m(x^m)=\sum_{k=0}^\infty\sum_{m=k}^\infty
\begin{pmatrix} m\\k \end{pmatrix}
L_m(y^{m-k},(x-y)^k)\]
Thus, if we can perform this change of summation for all $x\in B_r(y)$, for some $r>0$, then we will have expressed $F(x)$ as a power series centered at $y$ whose $k$-homogeneous polynomial coefficients are given by
\begin{equation} \label{eq coefficients}
A_k(z):=\sum_{m=k}^\infty
\begin{pmatrix} m\\k \end{pmatrix}
L_m(y^{m-k},z^k).
\end{equation}
Observe that the absolute convergence of the double sum (\ref{eq double series}) for $x\in B_r(y)$ implies the absolute convergence of the $A_k$ in $B_r(0)$ and hence on all $X$ by homogeneity.

Absolute convergence of (\ref{eq double series}) holds if
\[\sum_{m=0}^\infty\sum_{k=0}^m
\begin{pmatrix} m\\k \end{pmatrix}
\|L_m\|_{(2)}|y|^{m-k}|x-y|^k=\sum_{m=0}^\infty\|L_m\|_{(2)}(|y|+|x-y|)^m<\infty.\]
This holds when
\begin{equation} \label{eq convergence1}
|y|+|x-y|<\frac{1}{\limsup\|L_m\|_{(2)}^\frac{1}{m}}.
\end{equation}
Choose a subsequence $m_i$ such that
\[\lim_{i\rightarrow\infty}\|L_{m_i}\|_{(2)}^\frac{1}{m_i}=\limsup\|L_m\|_{(2)}^\frac{1}{m}.\]
Let $\rho=\frac{1}{\limsup\|\lh_m\|_{(2)}^\frac{1}{m}}>0$ be the radius of uniform convergence of (\ref{eq series at 0}) and suppose $\rho<\infty$. Then (\ref{eq convergence1}) is satisfied if
\begin{equation}
\begin{aligned} \label{eq convergence2}
|y|+|x-y| & < \rho\cdot\frac{\limsup_{i\rightarrow\infty}\|\lh_{m_i}\|^\frac{1}{m_i}}{\lim_{i\rightarrow\infty}\|L_{m_i}\|_{(2)}^\frac{1}{m_i}}\\
& = \rho\cdot\limsup_{i\rightarrow\infty}\bigg(\frac{\|\lh_{m_i}\|}{\|L_{m_i}\|_{(2)}}\bigg)^\frac{1}{m_i}\\
& := \bar{\rho}.
\end{aligned}
\end{equation}
Thus, for $|y|<\bar{\rho}$ and $|x-y|<\bar{\rho}-|y|$, the series (\ref{eq double series}) converges absolutely.

Altogether then, we have shown that for any fixed $|y|<\bar{\rho}$, we have
\begin{equation} \label{eq series}
F(x)= \sum_{k=0}^\infty A_k(x-y)
\end{equation}
for $|x-y|<\bar{\rho}-|y|$.

From \cite{C} Corollary 1, p. 165, we have the following

\begin{lemma} \label{lemma Chae}
For any power series $F(x)= \sum_{k=0}^\infty A_k(x-y)$ centered at $y$ with positive radius of uniform convergence, $A_k=\frac{1}{k!}D^kF(y)$ as $k$-homogeneous polynomials.
\end{lemma}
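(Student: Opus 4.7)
The plan is to differentiate the series $F(x) = \sum_{m=0}^{\infty} A_m(x-y)$ termwise $k$ times and then evaluate at $y$. First I would establish that this termwise differentiation is legitimate on every closed ball $\overline{B_s(y)}$ strictly inside the disk of uniform convergence. Writing $A_m = \widehat{L_m}$ for the associated symmetric $m$-linear form $L_m$, the polarization formula (Lemma \ref{polarization formula}) gives the standard bound $\|L_m\| \leq \frac{m^m}{m!}\|A_m\|$, from which one obtains Cauchy-type estimates of the form $\|D^j A_m(z)\| \leq \frac{m!}{(m-j)!}\|L_m\|\|z\|^{m-j}$ for the derivatives of the $m$-homogeneous polynomial $z \mapsto A_m(z)$. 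A root-test argument then shows that the $k$-times formally differentiated series still converges uniformly on any $\overline{B_{s'}(y)}$ with $s'$ below the original radius, so one may legitimately write
\[D^k F(y) = \sum_{m=0}^{\infty} D^k[A_m(\,\cdot - y)](y).\]

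Next I would compute each of these terms. Setting $h = x - y$, the map $h \mapsto A_m(h) = L_m(h,\ldots,h)$ is itself an $m$-homogeneous polynomial and hence coincides with its own Taylor expansion about $h = 0$. By uniqueness of Taylor coefficients (or, more concretely, by expanding $L_m((h+w)^m)$ using symmetry and reading off the coefficient of $w^j$), one sees that $D^j[A_m(\,\cdot - y)](y) = 0$ for every $j \neq m$, while $D^m[A_m(\,\cdot - y)](y) = m!\,L_m$ as symmetric $m$-linear forms. Consequently only the $m = k$ summand survives, yielding $D^k F(y) = k!\,L_k$ and therefore $\frac{1}{k!}D^k F(y) = \widehat{L_k} = A_k$ as $k$-homogeneous polynomials, which is the desired identity.

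The main obstacle is the justification of termwise differentiation: in the Banach-space setting, passing the derivative under an infinite sum requires the Cauchy-type majorizations derived above from polarization, and these depend crucially on the bound $\|L_m\| \leq (m^m/m!)\|A_m\|$ recalled in the introduction. Once the majorizations are in place, the computation of $D^k F(y)$ reduces to the essentially elementary observation that every Fréchet derivative of a homogeneous polynomial of degree $m$ at its centre of homogeneity vanishes except at order $m$, where the derivative recovers a factor of $m!$ times the associated symmetric multilinear form.
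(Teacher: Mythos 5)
The paper never proves this lemma; it is imported verbatim from Chae's book (\cite{C}, Corollary 1, p.~165), so there is no in-paper argument to compare yours against. Your proof is the standard one and its structure is sound: justify termwise differentiation of $\sum_m A_m(x-y)$ in some neighbourhood of $y$, then use the fact that for an $m$-homogeneous polynomial $A_m=\widehat{L_m}$ one has $D^jA_m(0)=0$ for $j\neq m$ and $D^mA_m(0)=m!\,L_m$, so that only the $m=k$ summand survives when the differentiated series is evaluated at the centre. That computation, resting on the identity $D^jA_m(z)(w_1,\ldots,w_j)=\frac{m!}{(m-j)!}L_m(z^{m-j},w_1,\ldots,w_j)$, is correct.

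One step is overstated, although this does not damage the conclusion. From your majorization $\|D^jA_m(z)\|\leq\frac{m!}{(m-j)!}\|L_m\|\,\|z\|^{m-j}\leq\frac{m^m}{(m-j)!}\|A_m\|\,\|z\|^{m-j}$, the root test (using $\big(m^m/(m-j)!\big)^{1/m}\rightarrow e$ for fixed $j$) only yields uniform convergence of the $j$-times differentiated series for $\|x-y\|<\rho/e$, not ``on any $\overline{B_{s'}(y)}$ with $s'$ below the original radius'' as you assert. In the real setting the ratio $\|L_m\|/\|\widehat{L_m}\|$ can genuinely grow exponentially in $m$ --- this loss of radius is precisely what the surrounding paper is about --- so no estimate based solely on $\|A_m\|$ can certify convergence of the derived series up to the full radius $\rho$. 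Fortunately the lemma only concerns $D^kF$ at the single point $y$, so termwise differentiability on the ball of radius $\rho/e>0$ suffices, and your conclusion $\frac{1}{k!}D^kF(y)=A_k$ stands.
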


\begin{theorem} \label{theorem radius}
Let $F(x)$ be a power series in a Banach space $X$, which we may take to be centered at the origin. Let $\rho>0$ denote its radius of uniform convergence and $\rho_{\substack{\\ A}}$ its radius of analyticity. Then
\begin{enumerate}
\item [(i)] $\rho_{\substack{\\ A}}\geq\frac{\rho}{\sqrt{2}}$,
\item [(ii)] for every $n$, the $n$th Fr\'{e}chet derivative $D^nF:X\rightarrow\mathcal{L}_n(X,Y)$ of $F(x)$, viewed as a map from $X$ to the Banach space $\mathcal{L}_n(X,Y)$ of continuous $n$-linear maps from $X$ to $Y$, has a Taylor series centered at the origin with radius of uniform convergence at least $\frac{\rho}{\sqrt{2}}$ for $n=2$ and $\frac{\rho}{\sqrt{e}}$ for $n\geq3$,
\item [(iii)] the radius of analyticity of the power series $D^nF(x)$ is at least $\frac{\rho}{\sqrt{2}}$.
\end{enumerate}
\end{theorem}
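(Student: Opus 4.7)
The plan is to handle the three parts in order, since~(iii) depends on~(i) and uses similar estimates to~(ii).

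Part~(i) is essentially set up by the discussion preceding the theorem: I would note that the expansion $F(x)=\sum_k A_k(x-y)$ converges absolutely whenever $|y|+|x-y|<1/\limsup\|L_m\|_{(2)}^{1/m}$, and Lemma~\ref{lemma asymptotic} gives $\|L_m\|_{(2)}\leq (\sqrt{2})^m\|\widehat{L}_m\|$, so $\bar\rho\geq\rho/\sqrt{2}$. Lemma~\ref{lemma Chae} identifies $A_k$ with $(1/k!)D^kF(y)$, making this a genuine Taylor expansion; this yields full analyticity in~$B_{\rho/\sqrt{2}}(0)$.

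For part~(ii), I would write the Taylor series of $D^nF$ at the origin as $\sum_k Q_k$ where $Q_k(x)(v_1,\ldots,v_n)=\frac{(k+n)!}{k!}L_{k+n}(x^k,v_1,\ldots,v_n)$. For $n=2$, I would polarize the two free slots $v_1,v_2$ via the classical bound $\|M\|\leq (n^n/n!)\|\widehat{M}\|$ applied to $M_x(v_1,v_2):=L_{k+2}(x^k,v_1,v_2)$, reducing $\sup|L_{k+2}(x^k,v_1,v_2)|$ to $2\sup|L_{k+2}(x^k,v^2)|\leq 2\|L_{k+2}\|_{(2)}\leq 2(\sqrt{2})^{k+2}\|\widehat{L}_{k+2}\|$ by Lemma~\ref{lemma asymptotic}, yielding radius $\geq\rho/\sqrt{2}$ after taking $k$-th roots. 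For $n\geq 3$, where $n+1\geq 4$ distinct vectors are involved, I would instead bound directly by $\|L_{k+n}\|_{(n+1)}\leq (C\sqrt{e})^{k+n}\|\widehat{L}_{k+n}\|$, yielding radius $\geq\rho/\sqrt{e}$.

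Part~(iii) is the delicate one. For $y$ with $|y|<\rho/\sqrt{2}$, part~(i) produces a $y$-centered expansion $F(x)=\sum_k B_k(x-y)$ with radius $R_y\geq\rho/\sqrt{2}-|y|$. Termwise differentiation produces the Taylor series of $D^nF$ at $y$:
\[D^nB_k(h)(v_1,\ldots,v_n)=\frac{k!}{(k-n)!}M_k(h^{k-n},v_1,\ldots,v_n),\]
with $M_k$ the symmetric $k$-linear form of $B_k$. To estimate $\|D^nB_k\|$ sharply, I would combine polarization in the $v_i$ (constant factor $n^n/n!$) with Harris's inequality~(\ref{eq Harris}) \emph{for the specific distribution}~$(k-n,n)$, yielding $|M_k(h^{k-n},v^n)|\leq\sqrt{k^k/[(k-n)^{k-n}n^n]}\|B_k\|$ for unit $h,v$. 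All the prefactors have $k$-th roots tending to $1$ (the Harris factor grows only polynomially in $k$ for fixed $n$), so $\limsup\|D^nB_k\|^{1/k}\leq 1/R_y$ and $\sum D^nB_k(h)$ converges uniformly on $|h|\leq r$ for every $r<R_y$. Letting $r$ approach $\rho/\sqrt{2}-|y|$ yields full analyticity of $D^nF$ in $B_{\rho/\sqrt{2}}(0)$.

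The main obstacle will be the sharp estimate in~(iii): a naive use of $\|M_k\|_{(2)}\leq(\sqrt{2})^k\|B_k\|$ from Lemma~\ref{lemma asymptotic} would compound with the $\sqrt{2}$ already spent inside $R_y$ and give only $\rho_A(D^nF)\geq\rho/2$. Applying Harris's inequality to the highly skewed distribution $(k-n,n)$ keeps the extra factor sub-exponential in $k$ and is essential; legitimacy of term-by-term differentiation is standard given the uniform convergence of each intermediate derivative series on compact balls interior to $B_{R_y}(y)$.
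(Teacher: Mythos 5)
Your parts (i) and (ii) track the paper's own argument closely: (i) is the same recentering computation, with $\bar\rho\geq\rho/\sqrt{2}$ coming from Lemma \ref{lemma asymptotic} and the identification of the coefficients via Lemma \ref{lemma Chae}; for (ii) the paper likewise starts from Chae's formula $T_0D^nF(x)=n!\sum_m\binom{m+n}{n}L_{m+n}(x^m)$ and bounds the coefficients by $\|L_{m+n}\|_{(n+1)}$, obtaining $\rho/\sqrt{e}$ for $n\geq3$. One real difference already appears here: the paper's proof of (ii) only treats $n=1$ (where $\|\cdot\|_{(2)}$ suffices) and $n\geq3$ explicitly, so the stated $\rho/\sqrt{2}$ for $n=2$ is not actually derived there (the paper's mechanism would give only $\rho/\sqrt{e}$ via $\|\cdot\|_{(3)}$); your polarization of the two free slots, which costs only the constant $2^2/2!=2$ and hence vanishes after taking $k$-th roots, genuinely supplies that case. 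Part (iii) is where you diverge most: the paper dismisses it in two lines by controlling $\|L_{m+n}\|_{(n+2)}$ as the $\|\cdot\|_{(2)}$-norm of $L_{m+n}$ viewed as an $\mathcal{L}_n(X,Y)$-valued map and citing Lemma \ref{lemma asymptotic}; but applied to the series (\ref{eq Taylor series}), whose radius of uniform convergence is only known to be $\rho/\sqrt{e}$, this compounds the constants and yields at best $\rho/\sqrt{e}$ or $\rho/\sqrt{2e}$, not the claimed $\rho/\sqrt{2}$ --- precisely the ``compounding'' trap you name. Your route --- differentiate the recentered expansions $\sum_k B_k(x-y)$ termwise and observe that the Harris constant for the skewed distribution $(k-n,n)$ is only $O(k^{n/2})$ for fixed $n$, hence invisible after $k$-th roots --- avoids the compounding and actually delivers $\rho_A(D^nF)\geq\rho/\sqrt{2}$, at the modest price of justifying term-by-term differentiation and the identification of the differentiated series with the Taylor series of $D^nF$ at $y$, which is indeed standard (Chae). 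In short: same skeleton for (i)--(ii), but your (iii) is a different and more careful argument that repairs a genuine gap in the paper's proof.
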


\begin{proof}
\begin{enumerate}
\item [(i)] Suppose $\rho<\infty$. Lemma \ref{lemma asymptotic} implies
    \begin{equation} \label{eq limsup}
    \limsup_{m\rightarrow\infty}\bigg(\frac{\|\lh_m\|}{\|L_m\|_{(2)}}\bigg)^\frac{1}{m}\geq\frac{1}{\sqrt{2},}
    \end{equation}
    hence $\bar{\rho}\geq\frac{\rho}{\sqrt{2}}$ by (\ref{eq convergence2}). Thus the preceding analysis shows that for $|y|<\rho$, the Taylor series (\ref{eq series}) is absolutely convergent in $\{x:|x-y|<\frac{\rho}{\sqrt{2}}-|y|\}$. From this, we get uniform convergence of (\ref{eq series}) in $\{x:|x-y|<r\}$ for every $r<\frac{\rho}{\sqrt{2}}-|y|$, since one can bound the tail of (\ref{eq series}) as follows:
    \begin{equation} \label{eq bound}
    \begin{aligned}
    \bigg|\sum_{k=N}^\infty A_k(x-y)\bigg| & \leq \sum_{k=N}^\infty\sum_{m=k}^\infty
    \begin{pmatrix} m\\k \end{pmatrix}
    |L_m(y^{m-k},(x-y)^k)|\\
    & \leq \sum_{m=N}^\infty\|L_m\|_{(2)}(|y|+|x-y|)^m
    \end{aligned}
    \end{equation}
    where (\ref{eq bound}) tends to zero uniformly in $x$ as $N\rightarrow0$ so long as $|y|+|x-y|\leq|y|+r$ is bounded away from $\bar{\rho}\geq\frac{\rho}{\sqrt{2}}$. This follows because we have shown that (\ref{eq bound}) viewed as power series in a single real variable has radius of uniform convergence at least $\bar{\rho}$. Finally, Lemma \ref{lemma Chae} implies that the power series (\ref{eq series}) is a Taylor series of $F(x)$ centered at $y$. So for $\rho<\infty$, this proves (i) since we have shown $\rho_{\substack{\\ A}}\geq\frac{\rho}{\sqrt{2}}$.

    For the remaining case $\rho=\infty$, i.e. $\limsup_{m\rightarrow\infty}\|\lh_m\|^\frac{1}{m}=0$, (\ref{eq limsup}) implies also that $\limsup_{m\rightarrow\infty}\|L_m\|_{(2)}^\frac{1}{m}=0$. From (\ref{eq convergence1}), we can apply the previous analysis for every $\bar{\rho}>0$, hence $\rho_{\substack{\\ A}}=\rho=\infty$.
\item [(ii)] By Chae, given any power series $F(x)=\sum_{m=0}^\infty L_m(x^m)$ with radius of uniform convergence $\rho>0$, for every $n$, $D^nF(x)$ has a Taylor series centered at the origin given by
    \begin{equation} \label{eq Taylor series}
    T_0D^nF(x)=n!\sum_{m=0}^\infty
    \begin{pmatrix} m+n\\n \end{pmatrix}
    L_{m+n}(x^m).
    \end{equation}
    In (\ref{eq Taylor series}), the linear maps $L_{m+n}$ are only evaluated on at most $n+1$ distinct arguments ($D^nF(x)$ takes values in $n$-linear maps). So by Lemma \ref{lemma asymptotic}, for $n\geq3$, the lower bound for the radius of uniform convergence of (\ref{eq Taylor series}) is $\frac{\rho}{\sqrt{e}}$ since
    \begin{eqnarray*}
    \limsup_{m\rightarrow\infty}\bigg\|
    \begin{pmatrix} m+n\\n \end{pmatrix}
    L_{m+n}\bigg\|_{(n+1)}^\frac{1}{m} &\leq& \bigg(\limsup_{m\rightarrow\infty}C(m,n+1)\sqrt{e}\bigg)\\
    & & \times\bigg(\limsup_{m\rightarrow\infty}\|\lh_{m+n}\|^\frac{1}{m}\bigg)\\
    &\leq& \frac{\sqrt{e}}{\rho}.
    \end{eqnarray*}
    Similarly, the lower bound for the radius of uniform convergence of $T_0DF(x)=\sum_{m=0}^\infty(m+1)L_{m+1}(x^m)$ is $\frac{\rho}{\sqrt{2}}$ since
    $\limsup_{m\rightarrow\infty}\|(m+1)L_{m+1}\|_{(2)}^\frac{1}{m}\leq\frac{\sqrt{2}}{\rho}$.\\
    By Lemma \ref{lemma Chae} and the formula (\ref{eq coefficients}) for the $A_k$, it follows that for $n\geq3$, $T_0D^nF(y)=D^nF(y)$ for all $y$ such that $|y|<\frac{\rho}{\sqrt{e}}$ and $T_0DF(y)=DF(y)$ for all $y$ such that $|y|<\frac{\rho}{\sqrt{2}}$.
\item [(iii)] We need to show that the radius of analyticity of (\ref{eq Taylor series}) is at least $\frac{\rho}{\sqrt{2}}$. For this, we need to control $\|L_{m+n}\|_{(n+2)}$, but this is precisely the $\|\cdot\|_{(2)}$-norm of $L_{m+n}$ viewed as a map from $X$ to $\mathcal{L}_n(X,Y)$. Applying Lemma \ref{lemma asymptotic} the proof is completed.
\end{enumerate}
\end{proof}

\end{document}